\theoremstyle{definition}
\newtheorem*{remark*}{Remark}
\newtheorem*{claim*}{Claim}
\theoremstyle{remark}
\theoremstyle{plain}
\newtheorem{theorem}{Theorem}[section]
\newtheorem{prop}[theorem]{Proposition}
\newtheorem{observation}[theorem]{Observation}
\newcommand{\overbar}[1]{\mkern 1.5mu\overline{\mkern-1.5mu#1\mkern-1.5mu}\mkern 1.5mu}
\def\dom{\mathop{\mathrm{Dom}}\nolimits}
\def\posreals{\mathbb R^{>0}}
\def\str#1{\mathbf {#1}}
\begin{document}
\bibliographystyle{alpha}

\title[]{A combinatorial proof of the extension property for partial isometries}

\authors{
\author[J. Hubi\v cka]{Jan Hubi\v cka}
\address{Department of Applied Mathematics (KAM)\\ Charles University\\ Prague, Czech Republic}
\email{hubicka@kam.mff.cuni.cz}
\author[M. Kone\v cn\'y]{Mat\v ej Kone\v cn\'y}
\address{Charles University\\ Prague, Czech Republic}
\email{matejkon@gmail.com}
\author[J. Ne\v set\v ril]{Jaroslav Ne\v set\v ril}
\address{Computer Science Institute of Charles University (IUUK)\\ Charles University\\ Prague, Czech Republic}
\email{nesetril@iuuk.mff.cuni.cz}
\thanks{Jan Hubi\v cka and Mat\v ej Kone\v cn\'y are supported by project 18-13685Y of the Czech Science Foundation (GA\v CR)}
}

\begin{abstract}
We present a short and self-contained proof of the extension property for partial
isometries of the class of all finite metric spaces.
\end{abstract}
\maketitle
\section{Introduction}
A class of metric spaces $\mathcal C$ has the \emph{extension property for
partial isometries} if for every $\str{A}\in \mathcal C$ there exists
$\str{B}\in \mathcal C$ containing $\str{A}$ as a subspace with the property
that every isometry of two subspaces of $\str{A}$ extends to an isometry of
$\str{B}$. (By isometry we mean a bijective distance-preserving function.)
In this note we give a self-contained combinatorial proof of the following theorem:
\begin{theorem}[Solecki~\cite{solecki2005}, Vershik~\cite{vershik2008}]
\label{mainresult}
The class of all finite metric spaces has the extension property for partial
isometries.
\end{theorem}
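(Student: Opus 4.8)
The plan is to argue in three stages: a discretisation reduction, a metric‑completion step, and a combinatorial construction.

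\textbf{Reduction to bounded integer distances.} First I would reduce to the class $\mathcal M_N$ of all finite metric spaces with distances in $\{1,2,\dots,N\}$, for an arbitrary fixed $N$. Whether a bijection between two subspaces of a finite metric space is an isometry depends only on which pairs of points are equidistant, so replacing the distances $d_1<\dots<d_k$ of a given $\str A$ by any increasing sequence satisfying the same triangle inequalities leaves all of its partial isometries unchanged. A harmless reparametrisation of the distance scale makes every triangle inequality of $\str A$ strict; the distance vectors realising that strict pattern form a nonempty open set, which therefore contains a vector of positive integers, so we may assume $\str A\in\mathcal M_N$. Conversely, given an EPPA‑witness $\str B\supseteq\str A$ inside $\mathcal M_N$, one relabels each distance $\delta$ of $\str B$ by $\rho(\delta)$, where $\rho$ is an increasing concave function with $\rho(0)=0$ returning the chosen integers to the original distances; concavity preserves the triangle inequality, the automorphism group is unchanged, and $\str A$ is still a subspace, provided the integer realisation above was chosen so that such a $\rho$ exists. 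Hence it suffices to treat $\mathcal M_N$.

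\textbf{Metric completion.} I would view $\mathcal M_N$ as the class of complete graphs whose edges are coloured by $\{1,\dots,N\}$ and which contain no triangle with colours $a\le b\le c$ satisfying $c>a+b$. Any complete graph $\str G$ with colours in $\{1,\dots,N\}$ has a canonical \emph{shortest-path completion} $\overbar{\str G}$, in which the distance of $u$ and $v$ is the least colour-sum along a $u$–$v$ path: it again belongs to $\mathcal M_N$, it coincides with $\str G$ when $\str G$ is already metric, and every colour-preserving bijection of $\str G$ is an automorphism of $\overbar{\str G}$. So the goal becomes to build, for each $\str A\in\mathcal M_N$, an edge-coloured complete graph $\str{B}_0\supseteq\str A$ such that (i) no path between two points of $\str A$ is shorter than the colour of the corresponding edge — so that $\str A$ remains a subspace of $\overbar{\str{B}_0}$ — and (ii) every partial isometry of $\str A$ extends to a colour-preserving automorphism of $\str{B}_0$; then $\overbar{\str{B}_0}$ is the required witness.

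\textbf{The construction, and the main obstacle.} Condition (ii) on its own is the familiar twisting situation: take vertices $A\times\Gamma$ for a suitable finite abelian group $\Gamma$ (say $(\mathbb Z_m)^{\binom A2}$ with $m$ large and odd), let $\Gamma$ act by translation on the second coordinate, and let the colour of an edge be determined by the $\str A$-colour of its projection together with the relevant twist coordinates, so that a partial isometry of $\str A$ lifts to an automorphism once the twist is corrected by an appropriate group element (the correction system being solvable because $m$ is odd). The obstruction — exactly why metric spaces escape the free-amalgamation toolkit — is that twisting can shrink a long edge into a short one, producing a shortcut that violates (i). Overcoming this is the technical heart: one has to design the dependence of the colour on the twist coordinate (and choose $m$) so that the ``twisted'' colours incident to the copy of $\str A$ never drop below what the triangle inequality inside $\str A$ permits, and then reprove (ii) directly, since the usable twists no longer form a subgroup. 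An equivalent, perhaps cleaner, route is to build $\str B$ as an honest metric space obtained by iterated canonical (shortest-path) amalgamation of many isometric copies of $\str A$ over common substructures: canonical amalgamation does not shorten distances inside the pieces, so metricity and the preservation of $\str A$ are automatic, and the whole problem is to choose which copies to glue, along which substructures, and in what pattern, so that the resulting finite space has enough isometries to realise every partial isometry of $\str A$ — as the three-point path, whose witness is a long cycle carrying a capped metric, already illustrates. I expect this reconciliation — enough symmetry for (ii) versus no shortened distances for (i) — to be the main obstacle, with everything else (metricity, tracking the extensions, the discretisation bookkeeping) routine.
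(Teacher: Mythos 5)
Your stage 2 is sound and is exactly the strategy of the paper (build an edge-labelled graph with enough automorphisms, then take the shortest path completion, which preserves automorphisms and, absent shortcuts, contains $\str{A}$ isometrically). But the decisive step is missing: stage 3 names the central difficulty --- extending all partial isometries of $\str{A}$ without creating paths that shortcut an edge of $\str{A}$ --- and then stops, offering two templates (a $(\mathbb Z_m)^{\binom{A}{2}}$-twisting with an unspecified dependence of colours on twist coordinates, or an iterated canonical amalgamation with the gluing pattern unspecified) and explicitly deferring ``the main obstacle''. That obstacle is the theorem. For comparison, the paper resolves it in two concrete moves: first a Hrushovski/Herwig--Lascar-type construction (Proposition~\ref{prop:eppa1}) giving EPPA for \emph{incomplete} $\posreals$-edge-labelled graphs, where no metricity is demanded; then an induction on cycle length in which all non-metric cycles on $i+1$ vertices are destroyed at once by passing to vertices $(x,\chi_x)$ with $\{0,1\}$-valuations on the bad sets containing $x$, edges flipping the valuation exactly across the long edge (the ``M\"obius strip''). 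The reason this does not destroy the needed symmetry is a specific structural fact you never isolate: since $\str{A}$ is a complete metric space, a non-metric cycle meets the copy of $\str{A}$ in at most one vertex or one (short or long) edge, so a partial isometry of $\str{A}$ moves at most two vertices of any bad cycle, and such a move can always be matched by flipping valuations. Without an argument of this kind (or a worked-out substitute), your proposal is a plan, not a proof.

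Separately, your discretisation reduction is both unnecessary and not fully justified. You need integers $n_1<\dots<n_k$ such that the relabelled $\str{A}$ is metric \emph{and} an increasing concave $\rho$ with $\rho(0)=0$ and $\rho(n_j)=d_j$ exists; for the genuine original distances this can be impossible: a degenerate triangle $d_c=d_a+d_b$ together with concavity and $\rho(0)=0$ forces $n_c=n_a+n_b$ and $\rho$ linear on $[0,n_c]$, hence $d_a/d_b\in\mathbb Q$, which fails in general. Your perturbation to strict inequalities dodges this, but then $\rho$ returns the integers to the \emph{perturbed} distances, so the final space contains the perturbed copy rather than $\str{A}$ itself, and you still owe an argument restoring the true distances (subtracting the perturbation need not preserve the triangle inequality in the large space). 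This can be repaired, but note that the paper sidesteps the whole issue: it works with arbitrary positive real distances and only uses that any non-metric cycle has fewer than $N$ vertices once $N$ exceeds the ratio of the largest to the smallest distance of $\str{A}$, which is all the ``boundedness'' the induction needs.
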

This result is important from the point of view of combinatorics, model theory
as well as topological dynamics.  It has several proofs~\cite{solecki2005,Pestov2008,rosendal2011}, \cite[Theorem 8.3]{sabok2017automatic}
which are based on deep group-theoretic results (the M.~Hall theorem~\cite{hall1949}, the Herwig--Lascar
theorem~\cite{herwig2000,otto2017,Siniora}, the {R}ibes--{Z}alesski{\u\i} thorem \cite{Ribes1993} or Mackey's
construction~\cite{mackey1966}).  Vershik announced an elementary proof \cite{vershik2008}
which remains unpublished and differs from the approach presented
here \cite{Vershikprivate}.

Our construction is elementary. We follow a general strategy analogous to the corresponding
results about the existence of Ramsey expansions of the class of finite metric
spaces developed in series of
papers~\cite{Nevsetvril1977,Nevsetvril2007,Hubicka2016}.  
We proceed in two steps.

First, given a metric space $\str A$, we find an edge-labelled graph $\str B_0$ which extends all
partial isometries of $\str A$, but does not define all distances between vertices and may not have
a completion to metric space (for example, it may contain non-metric triangles).
This step is analogous to the easy combinatorial proof of Hrushovski's theorem by
Herwig and Lascar~\cite{herwig2000}.

In the second step we further expand and ``sparsify'' $\str B_0$ in order to remove all
obstacles which prevent us from being able to define the missing distances and get a metric space. Once all such
obstacles are eliminated, we can complete the edge-labelled graph to a metric space $\str B$ by assigning
every pair of vertices a distance corresponding to the shortest path connecting
them.  This part is inspired by a clique-faithful EPPA construction of Hodkinson and
Otto~\cite{hodkinson2003} (see also Hodkinson's exposition~\cite{hodkinson}).

Similarly to the Ramsey constructions which were developed to work under rather
general structural conditions~\cite{Hubicka2016}, our technique generalises
further to classes described by forbidden homomorphisms as well as to the
classes with algebraic closures (in the sense of~\cite{Evans3}) and antipodal
metric spaces (as shown in~\cite{eppatwographs}). These strengthenings are
going to appear elsewhere.

\section{Notation and preliminaries}
Given a set of labels $L$, an \emph{$L$-edge-labelled graph} is an (undirected) graph where
every edge has a unique \emph{label} $\ell \in L$.  In our proof we use
``partial'' metric spaces (where some distances are not known) and thus we
will consider metric spaces as a special case of $\posreals$-edge-labelled graphs
where $\posreals$ is the set of positive reals: an $\posreals$-edge-labelled graph is
then a \emph{metric space} if it is complete (that is, every pair of vertices is
connected by an edge) and for every triple of distinct vertices $x,y,z$ the
labels of edges $\{x,y\}$, $\{y,z\}$ and $\{x,y\}$ satisfy the triangle inequality.

While we need to work with edge-labelled graphs to represent intermediate objects
in our construction, we find it useful to adopt standard terminology of metric
spaces.  If vertices $x$ and $y$ of an edge-labelled graph $\str{A}$ form an edge
with label $\ell$, we will also say that the edge $\{x,y\}$ has \emph{length}
$\ell$, or write $d_\str{A}(x,y)=d_\str{A}(y,x)=\ell$ and say that $\ell$ is the
\emph{distance} between $x$ and $y$.

We will use bold letters such as $\str{A},\str{B},\str{C},\ldots$ to denote
edge-labelled graphs and the corresponding normal letters ($A,B,C,\ldots$)
to denote the corresponding vertex sets.

\medskip

Given two $L$-edge-labelled graphs $\str{A}$ and $\str{B}$, a function $\varphi\colon A\to
B$ is a \emph{homomorphism} if for every pair of vertices $x,y\in A$ which forms an edge with label
$\ell$ in $\str{A}$ it holds that $\varphi(x),\varphi(y)$ is an edge with label $\ell$ in $\str{B}$.
If $\varphi$ is injective, it is a \emph{monomorphism}. A monomorphism
 where for every $x,y\in A$ it holds that
$x,y$ form an edge with label $\ell$ if and only if $\varphi(x),\varphi(y)$
form an edge with the same label $\ell$ is called \emph{embedding}. If $A\subseteq B$ and the inclusion map is a monomorphism, we say that $\str{A}$ is a \emph{subgraph} of $\str{B}$. A subgraph is \emph{induced} if the inclusion map is an embedding.  A
bijective embedding is an \emph{isomorphism} and an isomorphism
$\str{A}\to\str{A}$ is an \emph{automorphism}.  A \emph{partial automorphism}
of $\str{A}$ is any isomorphism of two induced subgraphs of $\str{A}$.
In the context of metric spaces we sometimes say \emph{isometry} instead of isomorphism.

A \emph{walk} in an edge-labelled graph $\str{A}$ connecting $x_1\in A$ and $x_n\in A$ is any sequence of vertices $x_1,x_2,\ldots, x_n$ such that for every $1\leq i<n$ there
is edge connecting $x_i$ and $x_{i+1}$. The \emph{length of this
walk} is $\sum_{1\leq i<n} d_\str{A}(x_i,x_{i+1})$. A \emph{path} is a walk which contains no repeated
vertices. If there is a path $x_1, \ldots, x_n$ with $n\geq 3$ and there is an edge connecting
$x_1$ and $x_n$ then $x_1,x_2,\ldots, x_n$ is a \emph{cycle}.
A cycle is \emph{non-metric} if it contains a (unique)
edge with label $\ell$ which is greater than sum of labels of all the remaining edges.
We will call this edge the \emph{long edge} of the non-metric cycle.
An $\posreals$-edge-labelled graph $\str{A}$ is \emph{connected} if for every $x,y\in A$ there
exists a path connecting $x$ and $y$.

\medskip

Given a connected $\posreals$-edge-labelled graph $\str{G}$, its \emph{shortest path
completion} is the complete $\posreals$-edge-labelled graph $\overbar{\str{G}}$ on the same vertex set as $\str{G}$
such that the label of $x,y$ in $\overbar{\str{G}}$ is the minimal length of a path connecting $x$ and $y$ in $\str{G}$. We will need the following fact about the shortest path completion.
\begin{observation}
\label{obs:shortest}
For every connected $\posreals$-edge-labelled graph $\str{G}$, its shortest path completion $\overbar{\str{G}}$
is a metric space. $\str{G}$ is a (not necessarily induced) subgraph of $\overbar{\str{G}}$ if and only
if it contains no induced non-metric cycles (that is, no induced subgraphs isomorphic to a non-metric cycle).
Moreover, every automorphism of $\str{G}$ is also an automorphism of $\overbar{\str{G}}$.
\end{observation}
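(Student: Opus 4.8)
The plan is to verify the three assertions in turn, with the second being the only one that requires real work.

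\emph{$\overbar{\str G}$ is a metric space.} First I would record the basic fact that, since all labels are positive reals, deleting the portion of a walk lying strictly between two occurrences of a repeated vertex never increases its length; hence for every pair $x,y\in G$ the minimum length of a walk from $x$ to $y$ equals the minimum length of a \emph{path} from $x$ to $y$, and by connectedness of $\str G$ this minimum is attained and is a positive real. Thus $\overbar{\str G}$ is a well-defined complete $\posreals$-edge-labelled graph, and $d_{\overbar{\str G}}$ is symmetric because paths may be reversed. For the triangle inequality on distinct $x,y,z$, concatenating a shortest path from $x$ to $y$ with a shortest path from $y$ to $z$ yields a walk from $x$ to $z$ of length $d_{\overbar{\str G}}(x,y)+d_{\overbar{\str G}}(y,z)$, so $d_{\overbar{\str G}}(x,z)$, being the minimum walk length, is at most this sum.

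\emph{Subgraph iff no induced non-metric cycle.} The easy direction: if $\str G$ contains a non-metric cycle (not necessarily induced) whose long edge $\{x,y\}$ has label $\ell$, then the remaining edges of the cycle form a path from $x$ to $y$ of length $<\ell$, so $d_{\overbar{\str G}}(x,y)<\ell=d_\str G(x,y)$ and $\str G$ is not a subgraph of $\overbar{\str G}$. For the converse, suppose $\str G$ is not a subgraph of $\overbar{\str G}$; then some edge $e$ of $\str G$, with endpoints $x,y$ and label $\ell$, admits a path from $x$ to $y$ of length $<\ell$. Among all pairs consisting of an edge together with a strictly-too-short path between its endpoints, choose one, say $(e,P)$ with $P=v_1 v_2\cdots v_n$, minimising the number of vertices of $P$. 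Then $P$ does not use $e$ (it is shorter than $e$) and has at least three vertices (a two-vertex path between $x$ and $y$ would be $e$ itself), so $C:=v_1 v_2\cdots v_n v_1$, closed by $e$, is a non-metric cycle with long edge $e$.

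\emph{$C$ is induced.} Since every vertex of $C$ lies on $P$, a chord of $C$ would be an edge $f=\{v_i,v_j\}$ of $\str G$ with $i+2\le j$ and $(i,j)\ne(1,n)$. A subpath of a shortest path is a shortest path (splicing a shorter connection into $P$ would shorten $P$ as a walk, hence also as a path), so the arc $v_i v_{i+1}\cdots v_j$ has length $d_\str G(v_i,v_j)$, and this is at most $\ell(f)$. If $\ell(f)>d_\str G(v_i,v_j)$, then $f$ together with the arc $v_i\cdots v_j$ is a pair of the chosen type with fewer vertices than $P$, a contradiction. If $\ell(f)=d_\str G(v_i,v_j)$, then replacing the arc $v_i\cdots v_j$ inside $P$ by the single edge $f$ yields a path $P'$ from $x$ to $y$ (its vertices are a subset of those of $P$, consecutive ones joined by edges) of the same length $<\ell$, not using $e$ since $f\ne e$, but with fewer vertices, again contradicting minimality. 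Hence $C$ has no chord, and $\str G$ contains an induced non-metric cycle.

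\emph{Automorphisms.} Any automorphism $\alpha$ of $\str G$ is a vertex bijection preserving edges and labels, so it maps each path to a path of equal length; therefore $d_{\overbar{\str G}}(\alpha(x),\alpha(y))=d_{\overbar{\str G}}(x,y)$ for all $x,y$, which, since $\overbar{\str G}$ is complete, says exactly that $\alpha$ is an automorphism of $\overbar{\str G}$.

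I expect the only genuinely delicate point to be the converse in the second part — producing an \emph{induced} non-metric cycle rather than merely a non-metric one — which the minimal-counterexample argument above handles; care is needed only in checking that each of the two arc-replacement operations produces an object that is genuinely a path and genuinely smaller.
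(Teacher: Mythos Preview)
Your argument for the first and third assertions is essentially the paper's. For the converse direction of the second assertion you take a different route: the paper simply notes that a chord of a non-metric cycle splits it into two cycles at least one of which is again non-metric, leaving the passage to an \emph{induced} non-metric cycle implicit (by induction on cycle length); you instead run an explicit minimal-counterexample argument on pairs $(\text{edge},\text{too-short path})$ minimising the number of vertices of the path. Both are equally elementary, and yours is more explicit about why the resulting cycle is induced.

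There is, however, a slip in your execution. You chose $P$ to minimise the \emph{number of vertices}, not the length, so $P$ need not be a shortest path, and your parenthetical justification (``splicing a shorter connection into $P$ would shorten $P$ as a walk, hence also as a path'') does not give a contradiction: the shorter walk you obtain may well contain a path from $x$ to $y$ with \emph{more} vertices than $P$. Hence the claim that the arc $v_i\cdots v_j$ realises the shortest-path distance between $v_i$ and $v_j$ is unjustified, and you have not ruled out $\ell(f)<(\text{arc length})$. The fix is trivial: drop that claim and split directly on whether $\ell(f)$ exceeds the arc length. If $\ell(f)>(\text{arc length})$ then $(f,\text{arc})$ is a pair with fewer vertices; if $\ell(f)\le(\text{arc length})$ then replacing the arc by $f$ inside $P$ yields a path $P'$ from $x$ to $y$ of length at most that of $P$, hence $<\ell$, with strictly fewer vertices, so $(e,P')$ is a smaller pair. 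Either way you contradict minimality, and the case you tried to exclude in advance is actually the easiest.
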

\begin{proof}
For any triple of vertices $x,y,z\in \overbar{\str{G}}$ there are, by definition, paths
$x=x_1,\allowbreak x_2,\allowbreak \ldots, x_n=y$ and $y=x_n,x_{n+1},\ldots, x_m=z$ in $\str{G}$ witnessing the
distances $d_{\overbar{\str{G}}}(x,y)$ and $d_{\overbar{\str{G}}}(y,z)$ respectively.
It follows that $x_1,x_2,\ldots, x_m$ is a walk in $\str{G}$ containing a path connecting $x$ and $z$ of length no greater than $d_{\overbar{\str{G}}}(x,y)+d_{\overbar{\str{G}}}(y,z)$. We thus conclude that $d_{\overbar{\str{G}}}(x,z)\leq d_{\overbar{\str{G}}}(x,y),d_{\overbar{\str{G}}}(y,z)$, that is, the triangle inequality holds, and thus $\overbar{\str{G}}$ indeed is a metric space.

If $\str{G}$ contains a non-metric cycle with the longest edge
between $x$ and $y$, it is easy to see that distance of $x$, $y$ in $\overbar{\str{G}}$
is strictly smaller than the distance of $x$ and $y$ in $\str{G}$. Therefore $\str G$ is not a subgraph of $\overbar{\str G}$.

Next we show that if $\str{G}$ contains no induced non-metric cycles then
it is a subgraph of $\overbar{\str{G}}$. Assume, to the
contrary, that there is a pair of vertices $x,y$ connected by an edge in
$\str{G}$ where the labels differs.  Because $x,y$ is also a path connecting $x$ and $y$ in $\str G$, we know that
the label of $x,y$ in $\str{G}$ is greater than the length of shortest path
connecting $x,y$, hence they together form a
non-metric cycle. This cycle is not necessarily induced but adding an edge to a
non-metric cycle splits it to two cycles where at least one  is necessarily also
non-metric.

\medskip

Finally, to verify that the shortest path completion preserves all
automorphisms observe that every distance in $\overbar{\str{G}}$ corresponds to
a path in $\str{G}$ (and to a lack of any shorter path) and paths are preserved
by every automorphism of $\str{G}$.
\end{proof}

\section{Extending partial automorphisms of $\posreals$-edge-labelled graphs}
\begin{prop}
\label{prop:eppa1}
For every finite $\posreals$-edge-labelled graph $\str{A}$ there exists a finite 
$\posreals$-edge-labelled graph $\str{B}$ containing $\str{A}$ as an induced subgraph such
that every partial automorphism of $\str{A}$ extends to an automorphism of
$\str{B}$.
\end{prop}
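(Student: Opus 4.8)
The plan is to recognise the statement as a combinatorial extension theorem of Hrushovski type and to prove it following the strategy of Herwig and Lascar~\cite{herwig2000}. First I would reduce to a finite label set: since $\str A$ is finite, only finitely many labels $L$ occur on its edges, and a finite $\posreals$-edge-labelled graph using only labels from $L$ is the same thing as a relational structure in the finite language with one binary, symmetric, irreflexive relation $R_\ell$ for each $\ell\in L$, the relations being pairwise disjoint since a pair carries at most one label. A partial automorphism of $\str A$ in the sense of the excerpt is precisely a partial automorphism of this relational structure, the class of all such structures is a free amalgamation class, and the proposition is exactly the extension property for partial automorphisms for this class. It is worth noting that at this stage $\str B$ is allowed to be arbitrarily far from metric --- it may well contain non-metric cycles --- which is why one works with edge-labelled graphs rather than metric spaces here; making $\str B$ completable to a metric space is the business of the next section.

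For the construction itself I would build $\str B$ as a finite structure carrying a vertex-transitive action of a finite group $\Gamma$, into which $\str A$ embeds as an induced substructure, arranging $\Gamma$ to be rich enough that \emph{every} partial automorphism of $\str A$ is the restriction of some element of $\Gamma$ acting through its action on $B$. A convenient way to do this is to take $B=A\times\Gamma$ for a suitable finite group $\Gamma$ generated by distinct involutions $(\tau_a)_{a\in A}$, with an edge of label $\ell$ placed between $(a,g)$ and $(b,h)$ exactly when $\{a,b\}$ is an edge of $\str A$ with label $\ell$ and $g\tau_a=h\tau_b$; then $a\mapsto(a,\tau_a)$ is an induced embedding of $\str A$ and left translation by $\Gamma$ acts by automorphisms. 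The point where care is needed is that a partial automorphism $p\colon S\to T$ of $\str A$ need not extend to any permutation of $A$ preserving $\str A$, so it cannot be realised merely by permuting the first coordinate; to absorb this one chooses $\Gamma$ to be a finite quotient of the free product $\ast_{a\in A}\mathbb Z_2$ in which a ball of appropriate radius in the Cayley graph still behaves freely (such a quotient exists by residual finiteness of free products of finite groups, which has an elementary proof, or one can write down an explicit $\Gamma$). The extension $\widehat p$ is then assembled from a left translation together with the identifications supplied by the $\tau_a$, the freeness ensuring the assignment is consistent across the whole of $B$.

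I expect the main obstacle to be precisely this last point: verifying that the constructed $\widehat p$ is a well-defined automorphism of $\str B$ and genuinely restricts to $p$ on the embedded copy of $\str A$. This is where the group-theoretic freedom does its work, reconciling the fact that $p$ moves the relational structure of $\str A$ in a way no global symmetry of $A$ can mimic; everything else --- finiteness of $\str B$, inducedness of the embedding $a\mapsto(a,\tau_a)$, and the fact that left translations preserve all the relations $R_\ell$ --- is routine. As an alternative I would consider a more hands-on, amalgamation-based argument: enumerate the partial automorphisms of $\str A$ and, processing them one at a time, repeatedly free-amalgamate fresh isomorphic copies of $\str A$ onto the current structure so as to turn the next partial automorphism into the restriction of an automorphism, the bookkeeping needed to make this terminate playing the role of the freeness above.
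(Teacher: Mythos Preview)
Your reduction to a finite relational language is fine, and invoking Hrushovski/Herwig--Lascar for that class is a legitimate strategy. But the concrete construction you sketch has a real gap, and it is not in fact the ``easy'' Herwig--Lascar argument.

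In your structure $B=A\times\Gamma$ with edges $(a,g)\sim_\ell(b,h)$ iff $a\sim_\ell b$ and $g\tau_a=h\tau_b$, left translation by $\Gamma$ fixes the first coordinate. So no left translation can send $(a,\tau_a)$ to $(p(a),\tau_{p(a)})$ unless $p(a)=a$. You recognise this, but the fix you offer---``identifications supplied by the $\tau_a$''---is not part of the construction you wrote down: $A\times\Gamma$ is a bare product with no identifications, and any automorphism that permutes the first coordinate nontrivially must already respect the $\str A$-structure there, which is exactly what a partial automorphism need not do. Choosing $\Gamma$ to be a sufficiently free finite quotient does not help, because the obstruction lives in the $A$-factor, not in $\Gamma$. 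Cayley-type EPPA constructions do exist, but they are set up differently (typically with $B$ a coset space or a suitably labelled Cayley graph, not a product $A\times\Gamma$), and the extension of a partial automorphism uses more than left translation. Your amalgamation alternative has the dual problem: gluing copies of $\str A$ along the graph of each partial map does not by itself produce global automorphisms, and handling the interaction between different partial automorphisms so that the process terminates in a finite structure is the whole difficulty, not ``bookkeeping''.

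The paper's proof is both different and more elementary, and it \emph{is} the Herwig--Lascar ``easy'' argument. One encodes vertices of $\str A$ as $k$-element subsets of a finite universe $U$ so that the \emph{size of the intersection} of $\psi(x)$ and $\psi(y)$ determines the label of the edge $xy$; then $\str B$ is the complete system of $k$-subsets of $U$ with this intersection rule. The point is that \emph{every} permutation of $U$ is an automorphism of $\str B$, so the automorphism group is enormous and no group-theoretic freeness is needed: extending a partial automorphism amounts to extending a partial permutation of $U$, which is trivial. This sidesteps entirely the obstacle you identified.
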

What follows is a variant of the easy proof of the extension property for partial automorphisms for graphs in~\cite{herwig2000}.
\begin{proof}
Fix $\str{A}$ and let $S=\{s_1,s_2,\ldots, s_n\}\subseteq \posreals$ be the finite subset of $\posreals$ consisting of
all distances in $\str{A}$ (the \emph{spectrum} of $\str{A}$). First we assign every vertex $x\in A$ the set $\psi(x)$
such that for some fixed $k$ the following is satisfied:
\begin{enumerate}
 \item\label{p1} For every $x\neq y\in A$ such that $d_\str{A}(x,y)=s_j$ and integer $i$ it holds that $(\{x,y\},i)\in \psi(x)$ iff $1\leq i\leq j$.
 \item\label{p2} For every $x\in A$ it holds that $|\psi(x)|=k$.
 \item\label{p3} For every $x\neq y\in A$ it holds that $\psi(x)\cap \psi(y) = \{(\{x,y\},i):1\leq i \leq j\}$, where $d_\str{A}(x,y)=s_j$.
\end{enumerate}
Such a function $\psi$ is easy to build. Assign elements to sets to satisfy
(\ref{p1}) and then extend the sets by arbitrary new elements (for example, natural numbers) to satisfy
(\ref{p2}) where every new element belongs to precisely one set so that (\ref{p3}) holds.

Put $$U=\bigcup_{x\in A} \psi(x)$$
to be the universe of our representation.  We construct $\str{B}$ as follows.
\begin{itemize}
 \item The vertex set $B$ of $\str{B}$ consists of all subsets of $U$ of size $k$ (we will denote them by upper case letters $X$ and $Y$).
 \item A pair of vertices $X,Y\in B$ is connected by an edge of length $s_i$ if and only if $X\neq Y$ and $|X\cap Y|=i$. Otherwise $X,Y$ is a non-edge.
\end{itemize}

It is easy to verify that the structure $\str{A}'$ induced by $\str{B}$ on $\{\psi(x);x\in A\}$ is isomorphic to $\str{A}$, that is, $\psi$ is an embedding of $\str A$ into $\str B$. We claim that every partial automorphism of $\str{A}'$ extends to an automorphism of $\str{B}$.
Fix such a partial automorphism $\varphi'$ of $\str{A}'$. By $\varphi$ we denote the partial automorphism induced by $\varphi'$ on $\str{A}$, i.e.
$\varphi=\psi^{-1} \circ \varphi'\circ \psi$.
Note that every permutation of $U$ gives rise to an automorphism of $\str{B}$. We are going to construct an automorphism $\hat\varphi$ of $\str B$ which extends $\varphi'$ by finding the right permutation $\pi$ by the following procedure:
\begin{enumerate}
 \item Start with the partial permutation $\pi$ mapping $(\{x,y\},i)\mapsto (\{\varphi(x),\varphi(y)\},i)$ for every $x\neq y\in \dom(\varphi)$ and $1\leq i\leq j$ where $d_\str{A}(x,y)=s_j$.
 \item Consider every choice of $x \in \dom(\varphi)$. Let $e$ be element of $\psi(x)$  such that $\pi(e)$ is not defined and put $\pi(e)$ to be any element of $\psi(\varphi(x))$ which is not in the image of $\pi$ yet. This is always possible because all the sets have same size and are disjoint except for elements we already assigned maps to.
 \item The partial permutation $\pi$ can then be extended to a full permutation in an arbitrary way.
\end{enumerate}
It is easy to see that $\pi$ induces an automorphism $\hat\varphi$ of $\str{B}$ and that this automorphism extends $\varphi'$.
\end{proof}
\section{Proof of the main result}
Now we are ready to prove Theorem~\ref{mainresult}.
Similarly as in the proof of Hodkinson--Otto~\cite{hodkinson2003}, we use Proposition~\ref{prop:eppa1} to obtain an $\posreals$-edge-labelled graph $\str{B}$. We then consider all ``bad'' substructures of $\str{B}$ (namely the non-metric cycles)
and eliminate each one independently while preserving all necessary symmetries and a projection (in fact, a homomorphism) to the original structure.  The resulting structure is then a product of all these constructions (however, we will define it explicitly). The extension property for partial automorphisms then follows from the fact that automorphisms of $\str{B}$ are mapping bad substructures to their isomorphic copies and we repaired both of them in the same way.

To simplify the construction, we proceed by induction on the size of the non-metric cycles (we start by fixing triangles, then four-cycles and so on). This will make all non-metric cycles considered in each step of the construction induced. Because $\str{A}$ is a metric space and thus a complete graph, we will only need to consider
partial automorphisms of the non-metric cycles which move at most two vertices.
This makes it easy to fix every non-metric cycle by unwinding it to a ``Möbius strip'' as depicted at Figure~\ref{fig:mobius}.
\begin{figure}
\centering
\includegraphics{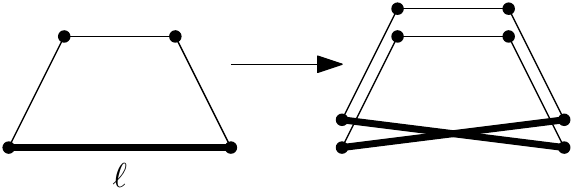}
\caption{Expansion of a non-metric cycle with longest edge $\ell$ to a ``Möbius strip''.}
\label{fig:mobius}
\end{figure}
\begin{proof}[Proof of Theorem~\ref{mainresult}]
Given a metric space $\str{A}$, let $N$ be an integer greater than the ratio of the largest distance in $\str{A}$ and the smallest distance in $\str{A}$.

Let $\str C_2$ be the $\posreals$-edge-labelled given by Proposition~\ref{prop:eppa1} applied on $\str A$ and let $\str A_2$ be the copy of $\str A$ in $\str C_2$. We then
build a sequence of $\posreals$-edge-labelled graphs $\str{C}_3,\str{C}_4,\ldots, \str{C}_N$
 such that for every $2\leq i\leq N$ the following conditions are satisfied:
\begin{enumerate}[label=(\Roman*)]
\item\label{prop1} $\str{C}_i$ contains an isomorphic copy $\str{A}_i$ of $\str{A}$ as a subgraph,
\item\label{prop2} every partial automorphism of $\str{A}_i$ extends to an automorphism of $\str{C}_i$, and,
\item\label{prop3} $\str{C}_i$ contains no non-metric cycles with at most $i$ vertices.
\end{enumerate}
First we show that from the existence of $\str{C}_N$ the theorem follows.
Observe that by the choice of $N$ every non-metric cycle has fewer than $N$
vertices and thus $\str{C}_N$ contains no non-metric cycles.  
Without loss of generality we can assume that $\str{C}_N$ is connected (otherwise we simply take 
the connected component of $\str{C}_N$ containing $\str{A}_N$) and thus we can apply Observation~\ref{obs:shortest}.
Let $\str{C}$ be the shortest path completion of $\str{C}_N$.
Because every automorphism of $\str{C}_N$ is also automorphism of
$\str{C}$ and $\str{A}_N$ is a subgraph of $\str{C}$ we get that $\str{C}$
extends all partial isometries of $\str{A}_N$ (which is isomorphic to
$\str{A}$).

\medskip

It remains to give the construction of $\str{C}_{i+1}$ from $\str{C}_i$ satisfying conditions \ref{prop1}--\ref{prop3}. 
A subset $M$ of $C_i$ is called \emph{bad} if $|M|=i+1$ and $\str{C}_i$ induces a non-metric cycle on $M$.
For $x\in C_i$ denote by $U(x)$ the family of all bad sets $M$ containing $x$.

We construct $\str{C}_{i+1}$ as follows:
\begin{itemize}
 \item Vertices of $\str{C}_{i+1}$ are pairs $(x,\chi_x)$ where $x\in C_i$ and $\chi_x$ is a function from $U(x)$ to $\{0,1\}$. We call such $\chi_x$ \emph{valuation function}.
 \item $(x,\chi_x)$ and $(y,\chi_y)$ are connected by an edge of length $\ell$ if and only if $d_{\str C_i}(x,y)=\ell$ and for every $M\in U(x)\cap U(y)$ one of the following holds:
\begin{enumerate}
 \item[(a)]\label{ec1} $x,y$ is the longest edge of the non-metric cycle induced on $M$ and $\chi_x(M)\neq \chi_y(M)$, or
 \item[(b)]\label{ec2} $x,y$ is not the longest edge of the non-metric cycle induced on $M$ and $\chi_x(M) = \chi_y(M)$.
\end{enumerate}
(These rules describe the ``Möbius strip'' of every bad set.)
\end{itemize}
\medskip
There are no other edges in $\str{C}_{i+1}$.
This finishes the construction of $\str{C}_{i+1}$. We now verify that $\str{C}_{i+1}$ satisfies conditions \ref{prop1}--\ref{prop3}.
\medskip

\noindent {\bf \ref{prop1}}:
We give an explicit description of an embedding $\psi$ of $\str{A}_i$ to $\str{C}_{i+1}$ and put $\str{A}_{i+1}$ to be the structure induced by $\str{C}_{i+1}$ on $\{\psi(x);x\in A_i\}$.

For every bad set $M\subseteq C_i$ such that $M\cap A_i\neq \emptyset$ we define a function $\chi_M\colon M\cap A_i\to \{0,1\}$. By definition, $M$ is bad because $\str C_i$ induces a non-metric cycle on $M$. Since $\str A$ is complete and it is a metric space (hence contains no non-metric triangles), it follows that $M\cap A$ consists either of one vertex or two vertices connected by an edge of the cycle.  Consider now two cases:
\begin{enumerate}
 \item $M\cap A=\{x,y\}$ where $\{x,y\}$ is the long edge of the non-metric cycle induced on $M$. In this case we put $\chi_M(x)=0$ and $\chi_M(y)=1$. (Notice that this step is not uniquely defined because the choice of $x$ and $y$ can be exchanged and it is indeed the purpose of the function $\chi_M$ to fix this choice.)
 \item $M$ does not intersect with $A$ by a long edge. In this case put $\chi_M(x)=0$ for all $x\in M\cap A$.
\end{enumerate}
Now we define a mapping $\psi$ from $A_i$ to $C_{i+1}$ by putting $\psi(x)=(x,\chi_x)$
where $\chi_x(M)=\chi_M(x)$ and put $A_{i+1}=\psi(A_i)$.  It is easy check that $\psi$ is an
embedding $\str{A}_i\to \str{C}_{i+1}$ because we chose functions $\chi_M$ in a way so that all edges
are preserved.  This verifies condition~\ref{prop1}.
\medskip

\noindent{\bf \ref{prop2}}:
We show that $\str{C}_{i+1}$ extends all partial automorphisms of $\str{A}_{i+1}$.

Consider any partial automorphism $\varphi$ of $\str{A}_{i+1}$.
Define $p\colon C_{i+1}\to C_i$ to be the \emph{projection} which maps every $(x,\chi_x)\in C_{i+1}$ to $x\in C_i$.
By $p$ we project the partial automorphism $\varphi$ of $\str{A}_{i+1}$ to a partial automorphism $p\circ \varphi\circ p^{-1}$
of $\str{A}_i$.
Denote by $\hat \varphi$ an extension of the partial automorphism $p\circ \varphi\circ p^{-1}$ of $\str A_i$ to
an automorphism of $\str{C}_i$ (which always exist by the induction hypothesis).

Let $F$ consist of all bad sets $M\subseteq C_i$ with the property that $M\cap A_i\neq \emptyset$ and there exists $x\in
M$, such that $(x,\chi_x)=\psi(x)\in \dom (\varphi)$ and $\chi_x(M)\neq \chi_y(\hat\varphi(M))$ where $(y,\chi_y)$ is such that $\varphi((x,\chi_x))=(y,\chi_y)$ (these are bad sets whose valuations are \emph{flipped} by $\varphi$).

We build an automorphism $\theta$ of $\str{C}_{i+1}$ by putting
 $\theta((x,\chi_x))=(\hat \varphi(x),\chi')$ where $\chi'(\hat\varphi(M))=\chi_x(M)$
if $M\notin F$ and $1-\chi_x(M)$ if $M\in F$. 
To verify that $\theta$ is indeed an automorphism first check that $\theta$ is one-to-one
because it is possible to construct its inverse.  Because the action of $\theta$ on the valuation functions does not affect the outcome of conditions for edges
in the construction of $\str{C}_{i+1}$, we get that $\theta$ is an isomorphism.

It remains to verify that $\theta$ extends $\varphi$.  This follows from the fact
that for every bad set $M$ it holds that $|M\cap \dom (\varphi)| \leq 2$.
Moreover, whenever $M\cap \dom (\varphi) = \{x,y\}$, $x\neq y$, $\varphi(x)=(x',\chi_{x'})$, $\varphi(y)=(y',\chi_{y'})$ then
$\chi_x(M) = \chi_{x'}(\hat\varphi (M))$ if and only if $\chi_y(M) = \chi_{y'}(\hat\varphi (M))$.
This finishes the proof of condition~\ref{prop2}.

\medskip

\noindent{\bf \ref{prop3}}:
Consider any set $M\subseteq \str{C}_{i+1}$ such that $|M|\leq i+1$ and the subgraph induced 
by $\str C_{i+1}$ on $M$ contains a non-metric cycle as a subgraph.  It follows that its
projection $p(M)$ contains a non-metric cycle in $\str{C}_i$.  By the induction
hypothesis we thus know that $|M|=i+1$ and $p(M)$ is a bad set (that is, $\str C_i$ induces a non-metric cycle on $p(M)$). Because of the projection of $\str C_{i+1}$ to $\str C$ it follows that $\str C_{i+1}$ induces a non-metric cycle on $M$.  Let
$(x,\chi_x),(y,\chi_y)$ be longest edge of this non-metric cycle. From the definition of the edges of $\str C_{i+1}$
we know that $\chi_x(M)\neq \chi_y(M)$.  Following the short edges of the cycle,
we however get $\chi_x(M)=\chi_y(M)$ a contradiction.
\end{proof}
\begin{remark*}
We in fact prove that the class of all finite metric spaces has the coherent extension property for partial isometries as defined by Solecki and Siniora~\cite{solecki2009,Siniora}: In Proposition~\ref{prop:eppa1} it is enough to fix a linear order on $U$ and extend the permutation in an order-preserving way. The coherency then goes through the proof of Theorem~\ref{mainresult}, it is enough to realise that ``flips compose''.
\end{remark*}
\begin{remark*}
This proof generalises to many known binary and general classes which are known to have
the extension property for partial automorphisms (see~\cite{Hubicka2017sauerconnant,Konecny2018b,Aranda2017} for examples of classes of structures having a variant of shortest path completion). This is going to appear in~\cite{eppatwographs,Hubicka2017sauer}.

There are classes, for which it is unknown whether they have EPPA or not.
Prominent among them are the class of all finite tournaments (see~\cite{Sabok} for partial results)
and the class of all finite partial Steiner triple systems~\cite{Hubicka2017designs}.
\end{remark*}
\section*{Acknowledgment}
We would like to thank the anonymous referee for remarks and corrections
that improved presentation of this paper and for the incredible speed in which they were delivered.

\bibliography{ramsey.bib}
\end{document}